\newcommand{\ZZ}{\mathbb Z}
\newcommand{\CC}{\mathbb C}
\newcommand{\NN}{\mathbb N}
\newcommand{\cpt}{\mathbb K}
\def\C{\textup{C}}
\def\Ext{\textup{Ext}}
\def\Hom{\textup{Hom}}
\def\SW{\mathtt{SW}}
\def\Mod{\mathtt{Mod}}
\def\prot{\hat{\otimes}}
\def\NSH{\mathtt{NSH}}
\def\NS{\mathtt{NSH}^f}
\def\K{\textup{K}}
\def\SS{\mathbb{S}}
\def\KK{\textup{KK}}
\def\E{\textup{E}}
\def\1{\bf{1}}
\def\Csep{\mathtt{SC^*}}
\def\dlim{\varinjlim}
\newcommand{\map}{\rightarrow}
\newcommand{\functor}{\longrightarrow}
\newcommand{\beq}{\begin{eqnarray}}
\newcommand{\beqn}{\begin{eqnarray*}}
\newcommand{\eeq}{\end{eqnarray}}
\newcommand{\eeqn}{\end{eqnarray*}}
\theoremstyle{definition}
\newtheorem{thm}{Theorem}[section]
\theoremstyle{definition}
\newtheorem*{Thm}{Theorem}
\newtheorem*{Rem}{Remark}
\newtheorem*{Conj}{Conjecture}
\newtheorem{lem}[thm]{Lemma}
\newtheorem{prop}[thm]{Proposition}
\newtheorem{defn}[thm]{Definition}
\newtheorem{rem}[thm]{Remark}
\newtheorem{que}[thm]{Question}
\begin{document}

\title{On the Generating Hypothesis in noncommutative stable homotopy}
\author{Snigdhayan Mahanta}
\email{snigdhayan.mahanta@mathematik.uni-regensburg.de}
\address{Fakult{\"a}t f{\"u}r Mathematik, Universit{\"a}t Regensburg, 93040 Regensburg, Germany.}
\subjclass[2010]{46L85, 55P42}
\keywords{$C^*$-algebras, stable homotopy, $\K$-theory, Generating Hypothesis, Spanier--Whitehead duality}
\thanks{This research was supported by the Deutsche Forschungsgemeinschaft (SFB 878), ERC through AdG 267079, and the Humboldt Professorship of M. Weiss.}

\begin{abstract}
Freyd's Generating Hypothesis is an important problem in topology with deep structural consequences for finite stable homotopy. Due to its complexity some recent work has examined analogous questions in various other triangulated categories. In this short note we analyze the question in noncommutative stable homotopy, which is a canonical generalization of finite stable homotopy. Along the way we also discuss Spanier--Whitehead duality in this extended setup.
\end{abstract}

\maketitle

\begin{center}
{\bf Introduction}
\end{center}

In (finite) stable homotopy theory the Spanier--Whitehead category of finite spectra, denoted by $\SW^f$, is a central object of study. Roughly speaking, it is constructed by formally inverting the suspension functor on the category of finite pointed CW complexes and it is a triangulated category. Its counterpart in the noncommutative setting is the triangulated noncommutative stable homotopy category, denoted by $\NSH$. This triangulated category was constructed by Thom \cite{ThomThesis} (see also \cite{Ambrogio}) building upon earlier works of Rosenberg \cite{RosNCT}, Schochet \cite{SchTopMet2}, Connes--Higson \cite{ConHig}, D\u{a}d\u{a}rlat \cite{DadAsymHom} and Houghton-Larsen--Thomsen \cite{HouTho} amongst others. The triangulated category $\NSH$ is a canonical generalization of $\SW^f$. It comes in a mysterious package carrying vital information about bivariant homology theories on the category of separable $C^*$-algebras. Triangulated category structures also play an important role in the theory of $C^*$-algebras; for instance, the work of Meyer--Nest on the Baum--Connes conjecture via localization of triangulated categories has had tremendous impact \cite{MeyNes}.

\smallskip
\noindent
Freyd stated the following {\em Generating Hypothesis} in \cite{FreGH} (Chapter 9): 

\begin{Conj}[Freyd]
The object $((S^0,\star),0)$ is a graded generator in $\SW^f$, where $(S^0,\star)$ is the pointed $0$-sphere ($\star$ being the basepoint).
\end{Conj}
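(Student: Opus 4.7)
The plan is to show that the stable homotopy functor $\pi_* : \SW^f \to (\text{graded abelian groups})$ is faithful, i.e.\ any morphism $f: X \to Y$ in $\SW^f$ with $\pi_*(f) = 0$ must vanish. Since a map in $\SW^f$ is zero iff it vanishes after each $p$-localization and rationally, and since rationally $\SW^f_{\QQ}$ is equivalent to bounded graded $\QQ$-vector spaces (making the claim immediate), I would fix a prime $p$ and work $p$-locally throughout.

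First I would attempt an induction on the number of cells of $X$. The base case $X = S^n$ is trivial, since $[S^n, Y] = \pi_n(Y)$ by construction of $\SW^f$. For the inductive step, present $X$ as the cofiber of some attaching map $S^{n-1} \to X'$ with $X'$ having one fewer cell; given $f: X \to Y$ with $\pi_*(f) = 0$, one would like to conclude $f|_{X'} = 0$ by induction, so that $f$ factors through $S^n$ as a class in $\pi_n(Y)$ which also must vanish. The weakness of this naive argument is that the factorization across a cofiber sequence is non-canonical and loses higher Toda-bracket information, so the vanishing of $\pi_*(f)$ does not propagate cleanly from $X'$ to $X$.

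A more serious approach would filter $f$ by its Adams filtration with respect to $\FF_p$-cohomology: the hypothesis $\pi_*(f) = 0$ forces $f$ into every stage $\mathcal{F}^s[X,Y]$ of the filtration, and one hopes that for finite $p$-local spectra $\bigcap_s \mathcal{F}^s[X,Y] = 0$, hence $f = 0$. The endgame would combine the nilpotence theorem of Devinatz--Hopkins--Smith with vanishing-line results for the Adams spectral sequence to rule out maps of infinite Adams filtration that are undetected by any homotopy class.

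The main obstacle is precisely this last step. Freyd's Generating Hypothesis has remained open since 1966, and by results of Devinatz any successful proof would impose severe structural constraints on $\pi_*(\SS)$ (for instance forcing drastic restrictions on its $p$-torsion). The crux is that finite stable homotopy appears to possess no known structural principle strong enough to rule out the hypothetical maps of infinite Adams filtration that are invisible to $\pi_*$. It is precisely this inaccessibility which motivates the present paper's strategy of transporting the question into $\NSH$, where the availability of bivariant $\K$-theoretic techniques and a richer ambient categorical structure may permit a more tractable analysis.
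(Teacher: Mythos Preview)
The statement you are addressing is Freyd's \emph{Generating Hypothesis}, which the paper states as a \textbf{Conjecture}, not a theorem. Immediately after stating it, the paper writes: ``It remains an open problem at the time of writing this article.'' There is therefore no proof in the paper to compare your proposal against; the paper's purpose is not to prove Freyd's conjecture but to investigate its analogue in the noncommutative stable homotopy category $\NSH$, where the na{\"i}ve version is shown to \emph{fail}.

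Your proposal is not a proof either, and to your credit you essentially acknowledge this: the first approach (cell induction) you correctly identify as broken because factorizations through cofiber sequences are non-canonical, and the second approach (Adams filtration plus nilpotence) you correctly identify as stalling at the step of ruling out maps of infinite Adams filtration. Your final paragraph concedes that the conjecture has been open since 1966. So what you have written is a survey of why the problem is hard, not a proof proposal in the usual sense. That is a reasonable thing to write, but it should not be labeled a proof, and there is no gap to diagnose beyond the fundamental one: no one knows how to do this.

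One correction to your closing sentence: the paper does \emph{not} transport Freyd's conjecture into $\NSH$ in the hope of proving it there. Rather, it shows the na{\"i}ve analogue in $\NSH$ is \emph{false} (Theorem~\ref{inj}), and then formulates a refined ``Matrix Generating Hypothesis'' in a smaller category $\NS$ (Question~\ref{ModGH}) that would imply Freyd's original conjecture. The noncommutative setting is offered as a broader perspective, not as a route to a proof.
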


An alternative formulation of the Generating Hypothesis asserts that for any two finite spectra $X,Y$ the canonical homomorphism 
\beq \label{GH}
\Phi: \SW^f(X,Y)\map\Hom_{\pi_*(\SS)}(\pi_*(X),\pi_*(Y))
\eeq is injective. Here $\SS$ denotes the sphere spectrum and $\Mod(\pi_*(\SS))$ denotes the category of right modules over the graded commutative ring $\pi_*(\SS)$. The conjecture has some interesting reformulations and generalizations \cite{FreGH,DevGH,HovGH,BohMay,SheStr}. It remains an open problem at the time of writing this article. However, it has spurred a lot of stimulating research. Analogues of the Generating Hypothesis have been addressed in several other contexts, such as the stable module category of a finite group algebra \cite{BenCheChrMin,CarCheMin}, the derived category of a ring \cite{Lockridge,HovLocPun}, equivariant stable homotopy \cite{Bohmann}, and so on. By Proposition 9.7 of \cite{FreIdem} (see also Corollary 3.2 of \cite{HovGH}) the injectivity of the map $\Phi$ in $\SW^f$ automatically implies its bijectivity. If true, the Generating Hypothesis would reduce the task of understanding the stable homotopy classes of maps between finite pointed CW complexes to a more tractable algebraic problem, i.e., understanding the module category of $\pi_*(\SS)$. 

A straightforward generalization of the Generating Hypothesis to the noncommutative setting would predict that the canonical map $\Phi':\NSH(A,B)\map\Hom_{\pi_*(\CC)}(\pi_*(A),\pi_*(B))$  in $\NSH$ is injective. We call it the {\em Na{\"i}ve Generating Hypothesis} in $\NSH$. This question is motivated by the algebraization problem of noncommutative stable homotopy. This assertion generalizes the following {\em Cogenerating Hypothesis} in finite stable homotopy: The canonical map $\Phi:\SW^f(Y,X)\map\Hom_{\pi^*(\SS)}(\pi^*(X),\pi^*(Y))$ is injective, where $\pi^*(-)$ denotes the stable cohomotopy functor. Observe that $(\SW^f)^\textup{op}$ sits inside $\NSH$ as a full triangulated subcategory. In $\SW^f$ there is a contravariant duality functor $D:\SW^f\functor\SW^f$ with a natural isomorphism $\textup{Id}_{\SW^f}\cong D\circ D$ that satisfies the property \beq \label{SW}\SW^f(X\wedge Z, Y)\cong\SW^f(X, DZ\wedge Y).\eeq Here $\wedge$ denotes the smash product of spectra. This phenomenon in $\SW^f$ is called {\em Spanier--Whitehead duality}. Using it one can see that the category $\SW^f$ is self-dual and that the Cogenerating Hypothesis is actually equivalent to the Generating Hypothesis. 
In the first part of this article (Section \ref{NGH}) we show that the Na{\"i}ve Generating Hypothesis in noncommutative stable homotopy fails to hold; however, our result is \emph{not} applicable to Freyd's Generating Hypothesis in finite stable homotopy. More precisely, we show

\begin{Thm}
The canonical map $\Phi':\NSH(A,B)\map\Hom_{\pi_*(\CC)}(\pi_*(A),\pi_*(B))$ in $\NSH$ is not injective in general. 
\end{Thm}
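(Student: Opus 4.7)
The strategy is to produce a nonzero object $A \in \NSH$ whose noncommutative stable homotopy groups $\pi_*(A)$ vanish. Given such an $A$, the identity $\id_A \in \NSH(A,A)$ is nonzero, while $\Phi'(\id_A)$ is the identity on the zero $\pi_*(\CC)$-module and therefore equals zero. This exhibits a non-trivial element of $\ker(\Phi')$ and refutes injectivity. Equivalently, such an $A$ witnesses that $\CC$ fails to be a graded generator of $\NSH$---a natural phenomenon in the noncommutative setting, in contrast with the role of the sphere spectrum in $\SW^f$.

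The natural candidate is the Cuntz algebra $A = \cO_2$. Cuntz's classical computation gives $\K_*(\cO_2) = 0$. Under the identification $\pi_n(B) = \NSH(\CC, \Sigma^n B)$, together with the standard correspondence between stable homotopy classes of $*$-homomorphisms $\CC \map B \otimes \cpt$ and $\K$-theory classes of $B$, one obtains $\pi_*(\cO_2) = \K_*(\cO_2) = 0$. Thus the only content to check is that $\cO_2$ itself does not vanish in $\NSH$.

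The main obstacle is precisely this: establishing $\cO_2 \not\cong 0$ in $\NSH$. Kirchberg's theorem gives $\cO_2 \cong 0$ in $\KK$, so this step requires genuinely exploiting that $\NSH$ is a strictly finer invariant than $\KK$. In Thom's construction, the equivalence relation on morphisms is that of stable homotopy, imposed prior to any further bivariant localization; in particular the canonical comparison functor $\NSH \map \KK$ need not be faithful, and the class of $\id_{\cO_2}$ can survive in $\NSH(\cO_2, \cO_2)$ even though it dies in $\KK(\cO_2, \cO_2)$. I would verify nontriviality by unpacking the definition of $\NSH$ and exhibiting such a witness. As a fallback, one can transfer the nontriviality assertion through Spanier--Whitehead duality (developed later in the article) to a more accessible dual object, or replace $\cO_2$ by a mapping telescope in $\NSH$ realising a phantom-type phenomenon unavailable in $\SW^f$.
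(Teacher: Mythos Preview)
Your strategy has a genuine gap at the very first step: the claimed identification $\pi_*(\cO_2) = \K_*(\cO_2)$ is not justified. The ``standard correspondence'' you invoke is $\K_0(B)\cong [\CC, B\prot\cpt]$, where the bracket denotes ordinary homotopy classes of $*$-homomorphisms after matrix stabilization; this is not what $\NSH(\CC, B)$ computes, since in $\NSH$ one inverts suspension rather than tensoring by $\cpt$. Indeed $\pi_*(\CC)\cong\pi_*(\SS)$ is the stable homotopy of spheres, not $\K_*(\CC)$. The identification $\pi_*(A)\cong\E_*(A)\cong\K_*(A)$ that the paper records holds only for \emph{stable} $A$, and $\cO_2$ is unital, hence not stable.

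If you repair this by replacing $\cO_2$ with its stabilization $\cO_2\prot\cpt$, the first step does go through---$\pi_*(\cO_2\prot\cpt)\cong\K_*(\cO_2)=0$---but then the second step collapses: $\cO_2\prot\cpt$ is stable, nuclear, and in the bootstrap class, so $\NSH(\cO_2\prot\cpt,\cO_2\prot\cpt)\cong\KK_0(\cO_2\prot\cpt,\cO_2\prot\cpt)=0$ by the UCT, and hence $\cO_2\prot\cpt$ \emph{is} the zero object in $\NSH$. Thus the two requirements you need (vanishing $\pi_*$ and nonvanishing in $\NSH$) pull in opposite directions for this class of examples. Your Spanier--Whitehead fallback is also unavailable: the paper's next theorem shows precisely that $D$ does not extend to $\NSH$.

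The paper's argument avoids these issues entirely. It takes $A=\C(X,x)\prot\cpt$ and $B=\C(Y,y)\prot\cpt$ for finite pointed CW complexes with torsion in $\K$-theory, arranged so that $\Ext^1(\K_*(\Sigma A),\K_*(B))\neq 0$. On such stable nuclear algebras one has $\NSH(A,B)\cong\KK_0(A,B)$ and $\pi_*\cong\K_*$, whence $\Phi'$ factors through the UCT map $\KK_0(A,B)\to\Hom(\K_*(A),\K_*(B))$, which has nontrivial kernel given by the $\Ext$-term.
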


\begin{Rem}
 Our arguments below exploit the fact that noncommutative stable homotopy of stable $C^*$-algebras agrees with their $\E$-theory naturally. Thus the above result should be viewed as a failure of the Generating Hypothesis in bivariant $\E$-theory.
\end{Rem}

Spanier--Whitehead duality is a peculiar property of finite stable homotopy with many interesting consequences. It is a natural question to ask whether noncommutative stable homotopy also possesses this property. Our answer to this question is

\begin{Thm}
The Spanier--Whitehead duality functor $D$ on $\SW^f$ does not extend to $\NSH$.
\end{Thm}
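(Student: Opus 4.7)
The plan is to suppose for contradiction that $\tilde D$ is such an extension of $D$ to $\NSH$ preserving the defining adjunction of Spanier--Whitehead duality, and then to exhibit an object of $\NSH$ admitting no dual. Translating the adjunction (\ref{SW}) across the contravariant embedding $(\SW^f)^{\op}\hookrightarrow\NSH$, an extension of $D$ in the Spanier--Whitehead sense amounts to a covariant self-equivalence $\tilde D\colon\NSH\longrightarrow\NSH$ with $\tilde D\circ\tilde D\cong\Id_{\NSH}$ and a natural isomorphism $\NSH(Y,X\otimes Z)\cong\NSH(\tilde D Z\otimes Y,X)$, where $\otimes$ denotes the tensor operation on $\NSH$ induced by the $C^*$-algebraic tensor product (restricting to $\wedge$ on finite spectra). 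Setting $Y=\CC$ and applying Yoneda produces coevaluation and evaluation morphisms satisfying the triangle identities, so every $Z\in\NSH$ would be strongly dualizable with dual $\tilde D Z$.

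My next step is to invoke the Remark above: on stable separable $C^*$-algebras, $\NSH$ agrees naturally with bivariant $\E$-theory, and on the bootstrap subclass $\E$-theory coincides with $\KK$-theory. Consequently, a stable bootstrap object dualizable in $\NSH$ must also be dualizable in $\KK$. The Rosenberg--Schochet universal coefficient theorem expresses $\KK(A,B)$ as an extension of $\Hom_{\ZZ}(\K_*(A),\K_*(B))$ by $\Ext^{1}_{\ZZ}(\K_*(A),\K_{*-1}(B))$; a direct inspection of the unit/counit of a putative duality then shows that a bootstrap object $A$ is $\KK$-dualizable precisely when $\K_*(A)$ is a finitely generated abelian group.

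For the contradiction I would take $A=\cK\otimes M_{2^{\infty}}$, the stabilization of the CAR algebra. It is nuclear, separable, and belongs to the bootstrap class, with $\K_0(A)=\ZZ[1/2]$ and $\K_1(A)=0$. Since $\ZZ[1/2]$ is not finitely generated as an abelian group, $A$ cannot be dualizable in $\KK$, and hence not in $\NSH$ either, contradicting the conclusion of the first paragraph. The main obstacle I anticipate is the precise comparison of the monoidal/duality structures on $\NSH$ and $\E$/$\KK$-theory required to transfer dualizability cleanly; once that identification is in place, the failure of finite generation of $\K_*(M_{2^{\infty}})$ closes the argument immediately.
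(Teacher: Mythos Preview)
Your argument is correct in outline but takes a genuinely different route from the paper. The paper's proof is far more elementary: it invokes Lemma~\ref{vanish} (that $\NSH(M_2(A),\CC)=0$ for every $A$, since no nonzero $*$-homomorphism can go from a matrix algebra into a commutative $C^*$-algebra) and then specializes the duality adjunction to $A=DM_2(\CC)$, $B=M_2(\CC)$, $C=\CC$, forcing $\id_{DM_2(\CC)}=0$, which is impossible. Thus the paper's witness to non-dualizability is $M_2(\CC)$, an object that \emph{is} dualizable in $\KK$ (being Morita equivalent to $\CC$); the obstruction it detects is the failure of Morita invariance in $\NSH$ and would be invisible to your $\KK$-theoretic criterion. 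Your route instead passes through a monoidal comparison $\NSH\to\E\to\KK$, the UCT, and the finite-generation characterization of dualizables in the bootstrap class, producing $\cpt\prot M_{2^\infty}$ as a witness that already fails dualizability in $\KK$. The obstacle you flag is real but not fatal: $\E$-theory arises from $\NSH$ by inverting the $\prot$-ideal class of stabilization maps, so the comparison is strong monoidal and dualizability transfers. What the paper's approach buys is brevity, self-containment, and a conclusion that persists inside the smaller category $\NS$ of Section~\ref{RefGH} (since $M_2(\CC)\in\NS$ while your witness $\cpt\prot M_{2^\infty}$ does not lie there); what your approach buys is a clean conceptual link to the known classification of dualizable objects in bivariant $\K$-theory.
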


In the second part of the paper (Section \ref{RefGH}) we explain the deficiencies that the na{\"i}ve extrapolation of Freyd's Generating Hypothesis suffers from. The main problem is the {\em size} of $\NSH$ and we provide a modified formulation by restricting our attention to a suitable subcategory, denoted by $\NS$ (see Question \ref{ModGH} entitled {\em Matrix Generating Hypothesis in $\NS$}). This modified formulation also implies Freyd's Generating Hypothesis in finite stable homotopy. Our modification does not {\em correct} the failure of Spanier--Whitehead duality in the noncommutative setting. We believe that it is an intrinsic feature of noncommutative stable homotopy that needs no remedy.

\smallskip
\noindent
{\bf Acknowledgements.} The author wishes to thank A. M. Bohmann, B. Jacelon, and K. Strung for helpful discussions. The author is also very grateful to R. Bentmann and J. D. Christensen for their constructive feedback, which prompted the author to write up Section \ref{RefGH}. Finally the author expresses his gratitude to the anonymous referee(s) for the comments that helped improve the exposition.

\section{Na{\"i}ve Generating Hypothesis in $\NSH$} \label{NGH}
Let $\Csep$ denote the category of separable $C^*$-algebras and $*$-homomorphisms. Recall from \cite{ThomThesis} that there is a stable homotopy functor $\pi_*:\Csep\functor \Mod(\pi_*(\CC))$ in noncommutative topology, which factors through $\NSH$ giving rise to the canonical map \beq\Phi':\NSH(A,B)\map\Hom_{\pi_*(\CC)}(\pi_*(A),\pi_*(B)).\eeq Here $\Mod(\pi_*(\CC))$ denotes the category of right modules over the ring $\pi_*(\CC)$. 

\subsection{Failure of the injectivity of $\Phi'$}
It is known that on the category of stable $C^*$-algebras noncommutative stable homotopy agrees with bivariant $\E$-theory \cite{ConHig,DadAsymHom}, i.e., $\NSH(A,B)\cong\E_0(A,B)$ and $\pi_*(A)\cong\E_*(A)$. In addition, on the category of nuclear $C^*$-algebras one has $\E_0(A,B)\cong\KK_0(A,B)$ and $\E_*(A)\cong\K_*(A)$ (see, for instance, the paragraph preceeding Section 6 in \cite{DadAsymHom}). It follows from the Universal Coefficient Theorem in $\KK$-theory \cite{RosSch} that there is a natural short exact sequence of abelian groups in the bootstrap class
\beq\label{UCT}
0\map\Ext^1(\K_*(\Sigma A),\K_*(B))\map\KK_*(A,B)\map\Hom(\K_*(A),\K_*(B))\map 0,
\eeq which splits unnaturally.

\begin{thm} \label{inj}
The canonical map $\Phi':\NSH(A,B)\map\Hom_{\pi_*(\CC)}(\pi_*(A),\pi_*(B))$ is not injective in general.
\end{thm}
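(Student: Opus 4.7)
The plan is to reduce the statement to bivariant $\K$-theory via the two identifications highlighted in the text. On separable, nuclear, stable $C^*$-algebras in the bootstrap class one has $\NSH(A,B)\cong\E_0(A,B)\cong\KK_0(A,B)$ and $\pi_*(A)\cong\K_*(A)$, under which the map $\Phi'$ becomes the canonical surjection appearing in (\ref{UCT}). Its kernel is precisely $\Ext^1(\K_*(\Sigma A),\K_*(B))$, so the theorem reduces to exhibiting a single pair of such algebras for which this graded Ext group is nonzero in total degree zero.

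For the example I would take $A := \mathcal O_3\otimes\cpt$ and $B := C_0(\RR)\otimes\mathcal O_3\otimes\cpt$, where $\mathcal O_3$ denotes the Cuntz algebra on three generators. Both algebras are separable, nuclear, stable, and in the bootstrap class, and one has $\K_0(A)=\ZZ/2$, $\K_1(A)=0$, while Bott periodicity applied to the suspension yields $\K_0(B)=0$, $\K_1(B)=\ZZ/2$. A direct computation of (\ref{UCT}) in degree zero then shows that the $\Hom$-term
\[
\Hom(\K_0(A),\K_0(B))\oplus\Hom(\K_1(A),\K_1(B))
\]
vanishes, whereas the $\Ext^1$-term
\[
\Ext^1(\K_1(A),\K_0(B))\oplus\Ext^1(\K_0(A),\K_1(B))
\]
is isomorphic to $\Ext^1(\ZZ/2,\ZZ/2)\cong\ZZ/2$. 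Consequently $\NSH(A,B)\cong\KK_0(A,B)\cong\ZZ/2$ while $\Phi'$ is identically zero, proving the claim.

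Essentially no serious obstacle is anticipated beyond the bookkeeping: the key point is that torsion in the $\K$-groups arranged in \emph{different} parities (here produced by suspending one of the algebras) forces the $\Hom$-part to vanish while leaving a genuinely nontrivial $\Ext^1$-contribution. All the heavy lifting is carried out by the UCT together with the comparison theorems $\NSH\cong\E\cong\KK$ already recalled in the text.
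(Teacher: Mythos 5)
Your proposal is correct and follows essentially the same route as the paper: identify $\NSH(A,B)$ with $\KK_0(A,B)$ using stability and nuclearity, and then detect non-injectivity via the nonvanishing $\Ext^1$-term in the UCT sequence \eqref{UCT}. The paper instead takes $A$ and $B$ to be stabilizations of commutative $C^*$-algebras of finite pointed CW complexes with torsion in $\K$-theory (and then needs the small extra observation that $\Hom_{\pi_*(\CC)}(\pi_*(A),\pi_*(B))$ injects into $\Hom(\K_*(A),\K_*(B))$); your Cuntz-algebra example $A=\mathcal O_3\otimes\cpt$, $B=C_0(\RR)\otimes\mathcal O_3\otimes\cpt$ is equally valid and has the pleasant feature that the $\Hom$-term vanishes outright, so $\Phi'$ is the zero map on a group isomorphic to $\ZZ/2$.
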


\begin{proof}
Let us choose judiciously $A=\C(X,x)\prot\cpt$ and $B=\C(Y,y)\prot\cpt$ ($\cpt$ being the $C^*$-algebra of compact operators) in such a manner that $\Ext^1(\K_*(\Sigma A),\K_*(B))$ is non-zero (with every abelian group in \eqref{UCT} finitely generated). This can be easily achieved by choosing finite pointed CW complexes $(X,x)$ and $(Y,y)$, such that their $\K$-theory groups contain non-zero torsion. Then $\NSH(A,B)\cong\KK_0(A,B)$ due to the stability and nuclearity of all the $C^*$-algebras in sight. One has the following commutative diagram

\beqn
\xymatrix{
\NSH(A,B)\ar[rr]^{\Phi'}\ar@{=}_{\cong}[d] && \Hom_{\pi_*(\CC)}(\pi_*(A),\pi_*(B))\ar@{^{(}->}[d]\\
\KK_0(A,B)\ar[rr] && \Hom(\K_*(A),\K_*(B)).
}
\eeqn Here the left vertical map is an isomorphism as argued before and the right vertical map is an injection. Now the map $\KK_0(A,B)\map\Hom(\K_*(A),\K_*(B))$ is not injective due to the non-vanishing of the $\Ext^1$-term in \eqref{UCT}, whence $\Phi'$ cannot be injective. 
\end{proof}

\subsection{Failure of Spanier--Whitehead duality}
The Spanier--Whitehead category is a tensor triangulated category under smash product of finite spectra. Thom proved in Theorem 3.3.7 of \cite{ThomThesis} that $\NSH$ is a tensor (under maximal $C^*$-tensor product $\prot$) triangulated category. The sphere spectrum $\SS$ (resp. $(\CC,0)$) turns out to be the tensor unit in $\SW^f$ (resp. $\NSH$).

\begin{lem} \label{vanish}
$\NSH(M_2(A),\CC)\cong 0$ for all $A\in\NSH$.
\end{lem}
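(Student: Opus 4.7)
The plan is to represent any morphism in $\NSH(M_2(A),\CC)$ by an asymptotic morphism $\varphi = (\varphi_t)_{t \geq 0}: M_2(A) \to \CC$ and to show that $\varphi$ is necessarily asymptotically null. The crucial feature---absent in $\E$-theory---is that the target $\CC$ is commutative and admits only the trivial projections $\{0,1\}$, which forces the images of the matrix units to vanish in the limit.

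First I would treat the special case $A = \CC$. Asymptotic multiplicativity yields $\varphi_t(e_{ii})^2 - \varphi_t(e_{ii}) \to 0$ for $i = 1,2$, so each $\varphi_t(e_{ii})$ is asymptotically idempotent in $\CC$, hence eventually close to $\{0,1\}$. The flip automorphism $\sigma$ of $M_2(\CC)$, being conjugation by the unitary $u = \left(\begin{smallmatrix} 0 & 1 \\ 1 & 0 \end{smallmatrix}\right)$, which lies in the connected component of the identity in $U(M_2(\CC))$, is homotopic to $\id$; hence $\varphi_t(e_{11}) - \varphi_t(e_{22}) \to 0$. Combined with the exact linear identity $\varphi_t(e_{11}) + \varphi_t(e_{22}) = \varphi_t(1_{M_2})$, and noting that $\varphi_t(1_{M_2})$ is also asymptotically $\{0,1\}$-valued, the only consistent solution is $\varphi_t(e_{ii}), \varphi_t(1_{M_2}) \to 0$. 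The off-diagonal units then vanish via $|\varphi_t(e_{12})|^2 = \varphi_t(e_{12})\varphi_t(e_{21}) + o(1) \to \varphi_t(e_{11}) \to 0$, so by linearity $\varphi_t \to 0$ on all of $M_2(\CC)$.

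For general $A$ one uses $M_2(A) \cong M_2(\CC) \prot A$. When $A$ is unital, the inclusion $x \mapsto x \otimes 1_A$ pulls $\varphi$ back to an asymptotic morphism $M_2(\CC) \to \CC$, which is asymptotically trivial by the previous step; asymptotic multiplicativity applied to the factorization $e_{ij} \otimes a = (e_{ij} \otimes 1_A)(1_{M_2} \otimes a)$, combined with the density of elementary tensors, then gives $\varphi_t \to 0$ on all of $M_2(A)$. The non-unital case reduces to the unital one by passing to the unitization $A^+$ and extending $\varphi$ (using an approximate identity of $A$).

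The main obstacle will be to reconcile this analytic argument with Thom's precise categorical definition of $\NSH$-morphisms, which involves further suspensions and homotopy classes. The underlying fact however---that $M_2$ admits no non-trivial asymptotic morphism into the commutative algebra $\CC$---is robust to these technicalities. This contrasts sharply with $\E$-theory, where the analogous group $\E_0(M_2(A), \CC) \cong \K_0(A)$ is typically nonzero precisely because the target is stabilized to $\CC \prot \cpt = \cpt$, which admits many mutually equivalent projections and in particular the canonical "rank-one" morphism from $M_2(A)$ that is obstructed here.
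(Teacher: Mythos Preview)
Your instinct that commutativity of the target kills $M_2$ is exactly right, but the paper reaches the conclusion by a much shorter route that sidesteps the very obstacle you flag at the end. Instead of analysing asymptotic morphisms, the paper invokes a representability result (Lemma~3.3.1 of Thom together with Proposition~16 of D\u{a}d\u{a}rlat): every class in $\NSH(M_2(A),\CC)$ is represented by a \emph{genuine} $*$-homomorphism $f:\Sigma^{r}M_2(A)\to\Sigma^{r}\CC$ for some $r$. Once you have an honest $*$-homomorphism, the argument is pure algebra: $\Sigma^{r}M_2(A)\cong M_2(\Sigma^{r}A)$, so $\ker f = M_2(I)$ for a closed ideal $I\lhd \Sigma^{r}A$, and then $M_2(\Sigma^{r}A/I)$ embeds into the commutative algebra $\Sigma^{r}\CC$, forcing $I=\Sigma^{r}A$ and $f=0$. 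No asymptotic analysis, no matrix units, no case $A=\CC$ first.

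Your approach is not wrong in spirit, but as written it has soft spots beyond the acknowledged suspension issue. First, asymptotic morphisms are only \emph{asymptotically} linear, so $\varphi_t(e_{11})+\varphi_t(e_{22})=\varphi_t(1_{M_2})$ is an asymptotic, not exact, identity. Second, the flip argument does not give what you claim: homotopy of $\sigma$ and $\id$ yields $[\varphi\circ\sigma]=[\varphi]$ as \emph{homotopy classes}, not $\varphi_t(e_{11})-\varphi_t(e_{22})\to 0$. Fortunately you do not need it: asymptotic multiplicativity already gives $\varphi_t(e_{12})\varphi_t(e_{21})-\varphi_t(e_{11})\to 0$ and $\varphi_t(e_{21})\varphi_t(e_{12})-\varphi_t(e_{22})\to 0$, and commutativity of $\CC$ then forces $\varphi_t(e_{11})-\varphi_t(e_{22})\to 0$ directly. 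Third, after suspending, the target becomes $\Sigma^{r}\CC=C_0(\RR^{r})$, which has no nonzero projections at all; your ``close to $\{0,1\}$'' dichotomy collapses to ``close to $0$'', which actually simplifies matters but changes the shape of the argument. All of this is repairable, but the paper's reduction to honest $*$-homomorphisms is the clean way to handle the categorical definition you were worried about.
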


\begin{proof}
It is known that every element in $\NSH(M_2(A),\CC)$ can be represented as the homotopy class of a $*$-homomorphism $f:\Sigma^{r} M_2(A)\map \Sigma^{r}\CC$ (use Lemma 3.3.1 of \cite{ThomThesis} and Proposition 16 of \cite{DadAsymHom}). Since $\Sigma^{r}\CC$ is commutative and $M_2(\CC)$ is simple, $f$ must be the zero morphism. Indeed, observe that $\Sigma^{r}M_2(A)\cong M_2(\C(S^{r},\star)\prot A)$, where $(S^{r},\star)$ is the pointed $r$-sphere. The kernel of the said homomorphism must be of form $M_2(I)$, where $I$ is a closed two-sided ideal of $\C(S^{r},\star)\prot A$. It follows that the map $M_2((\C(S^{r},\star)\prot A)/I)\map \Sigma^{r}\CC$ is an isomorphism onto its image. Since the codomain is commutative, the two-sided ideal $I$ must be all of $\C(S^{r},\star)\prot A$, from which the assertion follows.
\end{proof}

\begin{thm} \label{Ndual}
The Spanier--Whitehead duality functor $D$ on $\SW^f$ does not extend to $\NSH$.
\end{thm}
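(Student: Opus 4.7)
My plan is to assume, towards a contradiction, that $D$ extends to a contravariant self-equivalence $D:\NSH\to\NSH$ with $D\circ D\cong\Id$ and the Spanier--Whitehead adjunction $\NSH(X\prot Z, Y)\cong\NSH(X, D(Z)\prot Y)$, and then to play this structure against Lemma \ref{vanish}. The first thing to pin down is $D$ on the tensor unit: the unit $\SS\in\SW^f$ corresponds under the embedding $(\SW^f)^{\op}\hookrightarrow\NSH$ to the tensor unit $\CC\in\NSH$, and any duality functor fixes the unit, so $D(\CC)\cong\CC$. Specialising the adjunction to $X=Y=\CC$ immediately yields $\NSH(Z,\CC)\cong\pi_0(D(Z))$ for every $Z$, which together with Lemma \ref{vanish} gives the vanishing $\pi_0(D(M_2(A)))=0$ for every $A\in\NSH$.

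Next I would upgrade this to vanishing against an arbitrary tensor partner. Two applications of the adjunction together with Yoneda produce $D(M_2(\CC)\prot A)\cong D(M_2(\CC))\prot D(A)$. Setting $Z':=D(M_2(\CC))$, the previous vanishing reads $\pi_0(Z'\prot D(A))=0$ for every $A$; because $D$ is a self-equivalence, $D(A)$ sweeps out all of $\NSH$, so $\pi_0(Z'\prot W)=0$ for every $W\in\NSH$. The decisive step is then to take $W:=D(Z')$: the adjunction with $X=\CC$, $Z=Z'$, $Y=Z'$ identifies $\pi_0(Z'\prot D(Z'))$ with $\NSH(Z',Z')=\End_\NSH(Z')$, so $\id_{Z'}=0$, whence $Z'\cong 0$ in $\NSH$. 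Applying $D$ and using $D^2\cong\Id$ together with $D(0)=0$ (since $D$ is exact) then collapses $M_2(\CC)\cong 0$ in $\NSH$.

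To close I would note that $M_2(\CC)\cong 0$ in $\NSH$ is absurd: the corner embedding $\CC\to M_2(\CC)$, $1\mapsto e_{11}$, is a $*$-homomorphism inducing the identity $\ZZ\to\ZZ$ on $K_0$, so it represents a non-zero class in $\pi_0(M_2(\CC))=\NSH(\CC,M_2(\CC))$. The main delicate point, and the step I expect to need the most care, is making precise what it means for $D$ to ``extend'': the contradiction only runs if $D$ carries both the involutivity $D^2\cong\Id$ and the Spanier--Whitehead adjunction with it, because these together supply the tensor-preservation $D(X\prot Y)\cong D(X)\prot D(Y)$ that lets a single instance of Lemma \ref{vanish} propagate to vanishing against every $W\in\NSH$.
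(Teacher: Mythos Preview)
Your argument is correct and lands on the same contradiction as the paper: $D M_2(\CC)$ (equivalently $M_2(\CC)$) would have to be a zero object in $\NSH$, which is absurd. The key input in both proofs is Lemma~\ref{vanish} combined with the Spanier--Whitehead adjunction.

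The difference is purely in efficiency of execution. The paper reaches $\End_{\NSH}(D M_2(\CC))=0$ in a single stroke: substitute $A=D M_2(\CC)$, $B=M_2(\CC)$, $C=\CC$ directly into the adjunction $\NSH(A\prot B,C)\cong\NSH(A,DB\prot C)$, so that the left side is $\NSH(M_2(D M_2(\CC)),\CC)=0$ by Lemma~\ref{vanish} and the right side is $\NSH(D M_2(\CC),D M_2(\CC))$. Your route instead first derives the tensor-compatibility $D(X\prot Y)\cong DX\prot DY$, then uses essential surjectivity of $D$ to upgrade Lemma~\ref{vanish} to the universal vanishing $\pi_0(Z'\prot W)=0$ for all $W$, and only then specialises to $W=DZ'$ to recover $\End(Z')=0$. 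All of this is sound, but the detour through tensor-preservation and essential surjectivity is unnecessary: the single substitution above already encodes exactly the instance you need. On the other hand, your closing argument that $M_2(\CC)\not\cong 0$ via the corner embedding and $\K_0$ is more self-contained than the paper's, which simply asserts that $D M_2(\CC)\cong 0$ is impossible by citing an external reference.
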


\begin{proof}
Suppose one could extend the functor $D$ to $\NSH$ satisfying the analogue of \eqref{SW} $$\NSH(A\prot B, C)\cong\NSH(A,DB\prot C).$$ Set $A= DM_2(\CC)$, $B=M_2(\CC)$ and $C=\CC$. Then there would be an induced isomorphism $$\NSH(D M_2(\CC)\prot M_2(\CC),\CC)\cong\NSH(D M_2(\CC), D M_2(\CC)).$$ Using the previous Lemma \ref{vanish}, we conclude that $$\NSH(D M_2(\CC) \prot M_2(\CC),\CC)\cong\NSH(M_2(D M_2(\CC)),\CC)\cong 0.$$ This would force the identity map on $D M_2(\CC)$ to be equal to $0$ in $\NSH$. This is not possible (see, for instance, Remark 3.4 of \cite{MyNSH}). 
\end{proof}

\begin{rem}
In (equivariant) bivariant $\K$-theory there are positive results in this direction \cite{SchSW}. The constructions in ibid. involve $\sigma$-$C^*$-algebras. 
\end{rem}




\section{Modified Matrix Generating Hypothesis} \label{RefGH}
We called the Generating Hypothesis in the previous section {\em na{\"i}ve} for the following reasons:

\begin{enumerate}
 \item \label{prob2} In $\SW^f$ the objects are finite pointed CW complexes, whereas in $\NSH$ they are arbitrary pointed compact metrizable spaces. The presence of finite matrix algebras in the noncommutative analogue of $\SW^f$ is non-negotiable, since they form natural building blocks of noncommutative CW complexes \cite{EilLorPed1,PedCW}. However, $\cpt$ can be expressed as $\dlim_n M_n(\CC)$ in $\Csep$, i.e., it can be regarded as a countable inverse limit of {\em noncommutative or fat points}. Thus $\cpt$ is not indispensable.
 
 \item \label{prob1} We did not take finite matrix algebras into consideration as test objects for the Generating Hypothesis, which would be the correct way to think about the Generating Hypothesis in this situation \cite{BohMay}.
 \end{enumerate}
 
 \medskip

Let us first address issue number \eqref{prob2}. We already observed that $\NSH$ is a tensor triangulated category under $\prot$, where the tensor structure generalizes the smash product of finite spectra.

\begin{defn} \label{NSW}
We define the category of noncommutative finite spectra, denoted by $\NS$, to be the smallest thick tensor triangulated subcategory of $\NSH$ generated by $M_n(\CC)$ for all $n\in\NN$. It is the noncommutative analogue of the Spanier--Whitehead category $\SW^f$.
\end{defn}

\begin{que}[Matrix Generating Hypothesis in $\NS$] \label{ModGH}
Let $f\in\NS(A,B)$ be a morphism, such that $\NS(M_n(\CC),f):\NS(M_n(\CC),(A,i))\map\NS(M_n(\CC),(B,i))$ is the zero morphism for all $n\in\NN$ and $i\in\ZZ$. Is $f$ itself the zero morphism in $\NS(A,B)$?
\end{que}

The above formulation uses $M_n(\CC)$ for all $n\in\NN$ as test objects and hence addresses issue number \eqref{prob1}. The full subcategory of $\NS$ consisting of commutative $C^*$-algebras is equivalent to the opposite category of $\SW^f$ via the functor $(X,x)\mapsto\C(X,x)$. Due to the self-duality of $\SW^f$ one can also view it sitting inside $\NS$ via the duality functor $D$.

\begin{prop}
The Matrix Generating Hypothesis in $\NS$ implies Freyd's Generating Hypothesis in $\SW^f$.
\end{prop}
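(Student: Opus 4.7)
The plan is to embed $\SW^f$ covariantly and fully faithfully into $\NS$ so that its image consists of commutative $C^*$-algebras and stable homotopy groups are recovered as $\NS$-morphism groups out of shifts of $\CC$. Concretely, I would take $\kappa := \iota \circ D$, where $D : \SW^f \to \SW^f$ is the contravariant Spanier--Whitehead duality (a self-equivalence with $D\circ D\cong\Id$) and $\iota : (\SW^f)^{\op} \overset{\sim}{\to} \NS_{\textup{c}}$, $(X,x)\mapsto\C(X,x)$, is the equivalence onto the full subcategory $\NS_{\textup{c}}\subset\NS$ of commutative $C^*$-algebras already recorded in the paper. Then $\kappa$ is a covariant fully faithful functor with $\kappa(S^0)=\CC=M_1(\CC)$, and the identification $\kappa(S^n)=\Sigma^{\pm n}\CC$ (sign depending on convention) together with fully faithfulness yields natural isomorphisms $\NS(\Sigma^i\CC,\,\kappa(X))\cong\pi_{\pm i}(X)$ for all $i\in\ZZ$.

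Assuming the Matrix Generating Hypothesis in $\NS$, let $g:X\to Y$ be a morphism in $\SW^f$ with $\pi_n(g)=0$ for every $n\in\ZZ$. I would verify the hypothesis of the MGH for $\kappa(g)\in\NS(\kappa(X),\kappa(Y))$. For the test object $M_1(\CC)=\CC$ and each $i\in\ZZ$, the induced map $\NS(\Sigma^i\CC,\,\kappa(g))$ is identified by the above isomorphism with $\pi_{\pm i}(g)$ and hence vanishes by hypothesis. For $m\geq 2$ I would prove instead the stronger statement $\NS(M_m(\CC),\,\Sigma^i\kappa(X))=0=\NS(M_m(\CC),\,\Sigma^i\kappa(Y))$, which makes the vanishing of $\NS(M_m(\CC),\,\Sigma^i\kappa(g))$ automatic. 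This is a direct generalization of Lemma \ref{vanish}: since $\Sigma^i\kappa(X)=C_0(\RR^i)\prot\C(DX,\star)$ is itself commutative, any class in $\NS(M_m(\CC),\,\Sigma^i\kappa(X))$ is represented, after a further suspension, by a genuine $*$-homomorphism $M_m(\C(S^r,\star))\to\Sigma^{r+i}\kappa(X)$ whose kernel is of the form $M_m(J)$ by simplicity of the matrix factor; commutativity of the target then forces $J$ to be the entire algebra, so the map is zero.

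Applying the MGH to $\kappa(g)$ now yields $\kappa(g)=0$, and fully faithfulness of $\kappa$ gives $g=0$ in $\SW^f$, which is precisely Freyd's Generating Hypothesis. I expect the main obstacle to be the generalization of Lemma \ref{vanish} from target $\CC$ to an arbitrary commutative object $C\in\NS$: one must verify that the representability of an $\NSH$-morphism by a genuine $*$-homomorphism up to a suitable suspension, invoked in the proof of Lemma \ref{vanish} via Lemma 3.3.1 of \cite{ThomThesis} and Proposition 16 of \cite{DadAsymHom}, continues to hold when $\CC$ is replaced by a target of the form $C_0(\RR^i)\prot\C(Z,z)$. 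Once this representability is secured, the simplicity-and-commutativity trick carries over verbatim, and the remainder of the argument is formal.
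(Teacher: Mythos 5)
Your proof is correct and follows essentially the same route as the paper's: the paper reduces to the Cogenerating Hypothesis (equivalent to Freyd's via Spanier--Whitehead duality in $\SW^f$), which is exactly what your composite $\kappa=\iota\circ D$ encodes, and both arguments then combine the identification of $\NS(\Sigma^i\CC,-)$ on commutative objects with stable (co)homotopy and the vanishing of $\NS(M_m(\CC),-)$ on commutative targets for $m\geq 2$. The ``main obstacle'' you flag --- extending Lemma \ref{vanish} to targets $C_0(\RR^{i})\prot\C(Z,z)$ --- is genuinely the step the paper compresses into ``arguing as in Lemma \ref{vanish}'', and it goes through exactly as you describe because the representability of $\NSH$-classes by genuine $*$-homomorphisms after suspension holds for arbitrary separable codomains, after which the $M_m(J)$-kernel and commutativity argument is verbatim.
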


\begin{proof}
Since the Cogenerating Hypothesis is equivalent to the Generating Hypothesis in $\SW^f$, it suffices to show that the former is a consequence of the Matrix Generating Hypothesis. Let $A=\C(X,x)$ and $B=\C(Y,y)$ be separable commutative $C^*$-algebras. Now suppose $f\in\NS(A,B)\cong\SW^f((Y,y),(X,x))$ is a morphism that induces the zero morphism $\pi_*(f):\pi_*(A)\cong\pi^*(X,x)\map\pi^*(Y,y)\cong\pi_*(B)$. We need to show that $f$ itself is the zero morphism in $\NS(A,B)\cong\SW^f((Y,y),(X,x))$. Since $B$ is commutative, arguing as in Lemma \ref{vanish} we deduce that $\NS(M_n(\CC),(B,i))=0$ for all $n>1$ and $i\in\ZZ$. It follows that $\NS(M_n(\CC),f)=0$ for all $n\in\NN$ and $i\in\ZZ$, whence by the Matrix Generating Hypothesis $f=0$.
\end{proof}

\begin{rem}
Notice that Proposition \ref{inj} is no longer applicable to $\NS$. Indeed, the $C^*$-algebra $\cpt$ does not belong to $\NS$ anymore as it is not finitely built. If we were to include $\cpt$ in the generating set of Definition \ref{NSW}, then the corresponding predictable {\em Matrix + Compact Generating Hypothesis} would once again be falsified by Proposition \ref{inj}.
\end{rem}

Let us reiterate that our results in Section \ref{NGH} do not invalidate Freyd's Generating Hypothesis in finite stable homotopy.  The failure of the injectivity of $\Phi'$ in the bigger category $\NSH$ crucially exploited the presence of genuinely noncommutative $C^*$-algebras. We hope that the generalized perspective will shed some light on the original problem.


\bibliographystyle{abbrv}
\bibliography{/home/ibatu/Professional/math/MasterBib/bibliography}

\smallskip

\end{document}